\DeclareFontFamily{OMS}{rsfs}{\skewchar\font'60}
\DeclareFontShape{OMS}{rsfs}{m}{n}{<-5>rsfs5 <5-7>rsfs7 <7->rsfs10 }{}
\DeclareSymbolFont{rsfs}{OMS}{rsfs}{m}{n}
\DeclareSymbolFontAlphabet{\scr}{rsfs}
\newtheorem{theorem}{Theorem}[section]
\newtheorem{proposition}[theorem]{Proposition}
\newtheorem{corollary}[theorem]{Corollary}
\theoremstyle{definition}
\newtheorem{definition}[theorem]{Definition}
\newtheorem{example}[theorem]{Example}
\theoremstyle{remark}
\newtheorem{remark}[theorem]{Remark}
\newtheorem{question}[theorem]{Question}
\numberwithin{equation}{theorem}
\newcommand{\ZZ}{\mathbb{Z}}
\DeclareMathOperator{\Spec}{{Spec}}
\newcommand{\sF}{\scr{F}}
\newcommand{\sD}{\scr{D}}
\DeclareMathOperator{\Supp}{{Supp}}
\DeclareMathOperator{\Hom}{Hom}
\DeclareMathOperator{\Ker}{{Ker}}
\DeclareMathOperator{\Coker}{{Coker}}
\DeclareMathOperator{\Ass}{{Ass}}
\newcommand{\calH}{\mathcal{H}}
\newcommand{\calK}{\mathcal{K}}
\newcommand{\calL}{\mathcal{L}}
\newcommand{\calM}{\mathcal{M}}
\newcommand{\calN}{\mathcal{N}}
\newcommand{\fm}{\mathfrak{m}}
\newcommand{\fa}{\mathfrak{a}}
\newcommand{\FF}{\mathbb{F}}
\DeclareMathOperator{\Ann}{{Ann}}
 \title{$F^e$-modules with applications to $D$-modules}
\author{Wenliang Zhang}\address{Department of Mathematics, Statistics, and Computer Science, University of Illinois at Chicago, Chicago, IL 60607}
\email{wlzhang@uic.edu}
\thanks{The author is partially supported by NSF through DMS-1752081.}
\subjclass[2020]{13A35, 13N10, 13D45, 14B15}
\keywords{$D$-modules, $F$-modules, local cohomology}
\begin{document}
\maketitle

\begin{abstract}
Using a theory of $F^e$-modules (a natural extension of Lyubeznik's $F$-module theory), we extend results on Matlis dual of $F$-finite $F$-modules to $D$-submodules of $F^e$-finite $F^e$-modules and apply these results to address the Lyubeznik-Yildirim conjecture in mixed characteristic.
\end{abstract}

\section{Introduction}
The theory of $F$-modules, whose roots can be found in \cite{PeskineSzpiroDimensionProjective, HartshorneSpeiserLocalCohomologyInCharacteristicP, HunekeSharp}, is introduced in \cite{LyubeznikFModules}. Since its introduction, it has been proven indispensable in the study of rings of prime characteristic $p$ (see, for instance, \cite{EmertonKisinRH, MontanerBlickleLyubeznik2005, LyubeznikVanishingLCCharp, ZhangLyubeznikNumbersProjectiveSchemes, LyubeznikSinghWalther}). Replacing the Peskine-Szpiro functor $F(-)$ by its $e$-th iteration $F^e(-)$, one obtains the theory of $F^e(-)$-modules (details can be found in \S\ref{background}). 

The motivations behind this article are two-fold:
\begin{enumerate}
\item[(\dag)] It follows from \cite[7.4]{KatzmanMaSmirnovZhang2018} that there exists an $F$-finite $F$-module which admits a simple $D$-submodule that is {\it not} an $F$-submodule ({\it cf.} Example \ref{F^3 not F}). The structure of such $D$-submodules warrants further investigations.
\item[(\ddag)] Let $R$ be the completion of $\ZZ[x_1,\dots ,x_6]$ at the maximal ideal $(2,x_1,\dots,x_6)$ and let $I$ be the monomial ideal associated with the minimal triangulation of the projective plane. Then it is proved in \cite[4.5]{DattaSwitalaZhang} that the support of the Matlis dual of $H^4_I(R)$ is $\Spec(R/(2))$, a proper subset of $\Spec(R)$, which provides a counterexample to \cite[Conjecture 1]{LyubeznikYildirim}. It is natural to ask whether $\Spec(R/(\pi))$ is always contained in the support of the Matlis dual of $H^j_I(R)$ whenever $R=V[[x_1,\dots,x_n]]$ and $V=(V,\pi V)$ is a complete DVR of mixed characteristic $(0,p)$. 
\end{enumerate}

One of our results regarding $(\dag)$ is the following. 
\begin{theorem}
\label{simple D submodule}
Let $R$ be a noetherian regular ring of characteristic $p$ which is a finitely generated $R^p$-module. If $\calN$ is a simple $D_R$-submodule of an $F^e$-finite $F^e$-module $\calM$, then there exists a positive integer $e'$ such that $\calN$ is an $F^{e'e}$-submodule of $\calM$.
\end{theorem}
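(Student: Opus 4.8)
The plan is to play the $F^e$-module structure of $\calM$ against its $D_R$-module structure, via Frobenius descent. Write $\theta\colon\calM\xrightarrow{\ \sim\ }F^{e}(\calM)$ for the structure isomorphism. Composing the canonical $p^{e}$-linear map $\calM\to F^{e}(\calM)$ with $\theta^{-1}$ produces a $p^{e}$-linear injective endomorphism $\Theta\colon\calM\to\calM$ with $R\cdot\Theta(\calM)=\calM$; for $k\ge1$ its $k$-fold iterate $\Theta^{k}$ is the analogous map for the induced $F^{ke}$-module structure. Since $R$ is regular and $F$-finite, $F^{e}$ is an exact auto-equivalence of the category of $D_R$-modules (Frobenius descent) and $\theta$ is $D_R$-linear; hence, for any $D_R$-submodule $\mathcal P\subseteq\calM$, the submodule $R\cdot\Theta(\mathcal P)=\theta^{-1}\!\bigl(F^{e}(\mathcal P)\bigr)$ is again a $D_R$-submodule, isomorphic to $F^{e}(\mathcal P)$. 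Applying this repeatedly to $\calN$, the submodules $\calN_j:=R\cdot\Theta^{j}(\calN)$ are simple $D_R$-submodules of $\calM$ for all $j\ge0$, with $\calN_{j+1}=R\cdot\Theta(\calN_j)$ and $\calN_j\cong F^{je}(\calN)$. The key elementary observation is that $\calN$ is an $F^{e'e}$-submodule of $\calM$ if and only if $\calN=\calN_{e'}$; so it suffices to produce $e'\ge1$ with $\calN_{e'}=\calN$.

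Because $\calN$ and each $\calN_j$ are simple $D_R$-modules, any two of them are either equal or intersect in $0$, and the operation $\mathcal A\mapsto R\cdot\Theta(\mathcal A)=\theta^{-1}(F^{e}(\mathcal A))$ is injective on simple $D_R$-submodules ($F^{e}$ being exact and faithful). Hence, \emph{provided only finitely many distinct submodules occur among the $\calN_j$}, the map $\mathcal A\mapsto R\cdot\Theta(\mathcal A)$ permutes that finite set and some positive power of it returns $\calN$ to itself, yielding $\calN_{e'}=\calN$. The whole theorem thus reduces to a finiteness statement about the sequence $(\calN_j)_{j\ge0}$.

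To establish that finiteness, let $\calG$ be the smallest $F^{e}$-submodule of $\calM$ containing $\calN$ (the intersection of all $F^{e}$-submodules through $\calN$; an $F^{e}$-submodule since $F^{e}$ is exact), so that every $\calN_j\subseteq\calG$. By the theory of $F^{e}$-finite $F^{e}$-modules developed in \S\ref{background}, $\calG$ is $F^{e}$-finite and has finite length as an $F^{e}$-module; fix an $F^{e}$-composition series $0=\calG^{0}\subsetneq\dots\subsetneq\calG^{t}=\calG$. For each $j$ let $a(j)$ be least with $\calN_j\subseteq\calG^{a(j)}$; since $\calN_j$ is simple and not inside $\calG^{a(j)-1}$, it meets $\calG^{a(j)-1}$ trivially and embeds, as a $D_R$-module, into the simple $F^{e}$-module $\calG^{a(j)}/\calG^{a(j)-1}$. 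As $\calN_{j+1}=R\cdot\Theta(\calN_j)\subseteq R\cdot\Theta(\calG^{a(j)})=\calG^{a(j)}$, the sequence $a(j)$ is non-increasing, hence eventually equal to some $a$, and all $\calN_j$ with $j\gg0$ embed, compatibly with $\Theta$, into a single simple $F^{e}$-module $\calL:=\calG^{a}/\calG^{a-1}$. Now one invokes the structure of simple $F^{e}$-modules over an $F$-finite regular ring (again \S\ref{background}): $\calL$ is semisimple as a $D_R$-module, a finite direct sum of pairwise non-isomorphic simple $D_R$-modules cyclically permuted by the Frobenius of $\calL$; in particular $\calL$ has only finitely many simple $D_R$-submodules, so the images of the $\calN_j$ in $\calL$ take finitely many values and return, after finitely many steps, to that of some $\calN_k$. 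Lifting this coincidence of images back through the finite filtration (so that $R\cdot\Theta^{k}(\calN)=R\cdot\Theta^{k+s}(\calN)$ for appropriate $k,s$) and using that $F^{ae}$ is injective on submodules, one obtains $\calN=\calN_{e'}$ for a suitable $e'$ --- which, as noted, is exactly the assertion that $\calN$ is an $F^{e'e}$-submodule of $\calM$.

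The crux, and the step I expect to demand the most care, is the passage from ``the isomorphism classes $[\calN_j]$ are periodic in $j$'' to ``the submodules $\calN_j$ themselves are periodic.'' A priori a fixed simple $D_R$-module may occur with multiplicity greater than one inside $\calM$, so two non-equal isomorphic copies of $\calN$ might both appear among the $\calN_j$; ruling this out --- equivalently, lifting the coincidence of images in $\calL$ to a coincidence of submodules of $\calM$, and bounding the multiplicities with which a fixed simple $D_R$-module occurs in $\calG$ --- is precisely where the finer structure of $F^{e}$-finite $F^{e}$-modules is indispensable: the semisimplicity of simple $F^{e}$-modules as $D_R$-modules, the multiplicity-freeness of that decomposition, the cyclic Frobenius action on the summands, and the behaviour of the endomorphism rings of the simple $D_R$-pieces under Frobenius twist. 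Once these are in hand, $\{\calN_j\}$ is finite and the permutation argument of the second paragraph completes the proof.
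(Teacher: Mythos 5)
Your reduction in the first two paragraphs is sound and is, in substance, the paper's strategy: view the iterates $\calN_j=\theta^{-j}(F^{je}(\calN))$ as simple $D_R$-submodules of $\calM$ (using that $F^e$ is a $D_R$-module auto-equivalence), observe that $\calN$ is an $F^{e'e}$-submodule precisely when $\calN_{e'}=\calN$, and note that finiteness of the set $\{\calN_j\}$ plus injectivity of $F^e$ on submodules forces pure periodicity. The problem is the finiteness step, and it is a genuine gap --- one you flag yourself but do not close. You route it through a composition series of the smallest $F^e$-submodule $\calG\supseteq\calN$ and then invoke a structure theorem for simple $F^e$-modules: semisimple over $D_R$, \emph{multiplicity-free}, with finitely many simple $D_R$-submodules, cyclically permuted by Frobenius. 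None of this appears among the background results of \S\ref{background}, and the parts you actually need are precisely the hard ones: semisimplicity alone does not give finitely many simple $D_R$-submodules (already $S\oplus S$ has infinitely many when $\End_{D_R}(S)$ is infinite), so without multiplicity-freeness --- which you assert but do not prove, and which does not follow from anything quoted --- the ``finitely many values'' claim fails. The final lifting step, from a coincidence of images in $\calL=\calG^{a}/\calG^{a-1}$ back to a coincidence of submodules of $\calM$, is likewise only gestured at. As written, the argument is conditional on statements at least as deep as the theorem itself.

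The paper's proof avoids this detour entirely. Using that $\calM$ has finite $D_R$-length, it takes the \emph{least} $t$ with $(\calN+F^e(\calN)+\cdots+F^{(t-1)e}(\calN))\cap F^{te}(\calN)\neq 0$; since $F^{te}(\calN)$ is a simple $D_R$-submodule, it is then contained in $\calL:=\calN\oplus F^e(\calN)\oplus\cdots\oplus F^{(t-1)e}(\calN)$, so $F^e(\calL)\subseteq\calL$, and the descending-chain argument of Proposition \ref{EK remark} shows $\calL$ is an $F^e$-submodule of $\calM$. The cycling then takes place inside this explicit finite direct sum built by hand, with no appeal to a classification of simple $F^e$-modules and no composition series of an ambient module. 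If you want to salvage your approach, the cleanest repair is to replace your $\calG$ and its composition series by this $\calL$: the finiteness you need then comes directly from the finite $D_R$-length of $\calM$ guaranteed by Remark \ref{D-results on Fe}.
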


Since each $F^e$-module is naturally a $D_R$-module (Remark \ref{D-structure on F-module}), it is feasible to consider $D$-submodules of an $F^e$-module in the statement of Theorem \ref{simple D submodule}. Example \ref{F^3 not F} shows that, in general, it is necessary to have $e'>1$, even when $e=1$. This provides one of the justifications for the necessity of considering $F^e$-modules (with $e>1$).

As a consequence of our Theorem \ref{simple D submodule}, we have the following result concerning $(\ddag)$. 
\begin{theorem}
\label{support D-submodule}
Let $(R,\fm)$ be a noetherian regular local ring of finite type over a regular local ring $A$ such that $A$ is module-finite over $A^p$. Let $\calN$ be an arbitrary (not necessarily simple) $D_R$-submodule of an $F^e$-finite $F^e$-module. Assume that $(0)$ is not an associated prime of $\calN$. Then
\[\Supp_R(\calN^\vee)=\Spec(R),\]
where $\calN^\vee$ denotes the Matlis dual of $\calN$.
 \end{theorem}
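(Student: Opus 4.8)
The plan is to localize the problem to a single simple $D_R$-submodule and then promote it to an $F^e$-submodule via Theorem \ref{simple D submodule}. First I would reduce to the case that $R$ is complete. The completion $\widehat{R}$ of $R$ at $\fm$ is again regular local and $F$-finite, and it is (trivially) of finite type over itself, which is module-finite over its $p$-th powers, so the hypotheses are preserved. Since $R\to\widehat{R}$ is flat, base change sends the $F^e$-finite $F^e$-module $\calM$ containing $\calN$ to an $F^e$-finite $F^e$-module over $\widehat{R}$ and $\calN$ to a $D_{\widehat{R}}$-submodule $\widehat{\calN}=\widehat{R}\otimes_R\calN$ of it, and since every prime in $\Ass_R(\calN)$ is nonzero, hence extends to a nonzero ideal of $\widehat{R}$, one checks $(0)\notin\Ass_{\widehat{R}}(\widehat{\calN})$. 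Tensor--hom adjunction, together with the identification of $E_R(R/\fm)$ with $E_{\widehat{R}}(\widehat{R}/\widehat{\fm})$, gives $\calN^\vee=\widehat{\calN}^\vee$ as $\widehat{R}$-modules (the latter dual taken over $\widehat{R}$); since $\widehat{R}$ is a domain with $\Frac(R)\subseteq\Frac(\widehat{R})$, one has $\Supp_{\widehat{R}}(\widehat{\calN}^\vee)=\Spec\widehat{R}\Rightarrow\calN^\vee\otimes_R\Frac(R)\ne0\Rightarrow\Supp_R(\calN^\vee)=\Spec R$ (using that support is stable under specialization). Thus I may assume $R=\widehat{R}$ is complete.

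Over a complete regular local $F$-finite ring, the $F^e$-analogue of Lyubeznik's finiteness theorem from \cite{LyubeznikFModules}, available in this setting, asserts that every $F^e$-finite $F^e$-module has finite length as a $D_R$-module; this is the point at which the hypothesis that $R$ is of finite type over a regular local ring $A$ module-finite over $A^p$ is really used. Hence $\calN$, a $D_R$-submodule of such a module, has finite length as a $D_R$-module, so the nonzero $\calN$ contains a simple $D_R$-submodule $\calN_0$. Since associated primes of a submodule lie among those of the ambient module, $\Ass_R(\calN_0)\subseteq\Ass_R(\calN)$, and therefore $(0)\notin\Ass_R(\calN_0)$.

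Now Theorem \ref{simple D submodule} applies to the simple $D_R$-submodule $\calN_0\subseteq\calM$: there is a positive integer $e'$ with $\calN_0$ an $F^{e'e}$-submodule of $\calM$. Iterating the structure isomorphism of $\calM$ exhibits $\calM$ as an $F^{e'e}$-finite $F^{e'e}$-module, and an $F^{e'e}$-submodule of an $F^{e'e}$-finite $F^{e'e}$-module is again $F^{e'e}$-finite, so $\calN_0$ is a nonzero $F^{e'e}$-finite $F^{e'e}$-module with $(0)\notin\Ass_R(\calN_0)$. At this stage I would invoke the extension to $F^e$-finite $F^e$-modules of the support theorem for Matlis duals of $F$-finite $F$-modules---the positive-characteristic statement underlying \cite{LyubeznikYildirim} and \cite{DattaSwitalaZhang}---which yields $\Supp_R(\calN_0^\vee)=\Spec R$. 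Finally, exactness of $\Hom_R(-,E_R(R/\fm))$ turns the inclusion $\calN_0\hookrightarrow\calN$ into a surjection $\calN^\vee\twoheadrightarrow\calN_0^\vee$, and since the support of a quotient is contained in that of the module, $\Spec R=\Supp_R(\calN_0^\vee)\subseteq\Supp_R(\calN^\vee)\subseteq\Spec R$, as desired.

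The assembly is soft; the real content sits in the two inputs---finiteness of length of $F^e$-finite $F^e$-modules as $D_R$-modules (needed to extract $\calN_0$) and, more seriously, the $F^e$-version of the Matlis-dual support theorem---which I expect to have been established earlier in the paper. The genuinely new ingredient making the combination work is Theorem \ref{simple D submodule}: without it a simple $D_R$-submodule need carry no $F$-module structure, and the Matlis-dual machinery would not apply.
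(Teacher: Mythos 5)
Your proof is correct and follows essentially the same route as the paper's: extract a simple $D_R$-submodule $\calN_0$ of $\calN$ (possible since $\calM$, hence $\calN$, has finite $D_R$-length), use Theorem \ref{simple D submodule} to realize $\calN_0$ as an $F^{e'e}$-submodule and hence an $F^{e'e}$-finite $F^{e'e}$-module, apply the $F^e$-version of the Lyubeznik--Yildirim support theorem (Theorem \ref{LY theorem for Fe}), and dualize the inclusion $\calN_0\hookrightarrow\calN$ to transfer the support statement to $\calN^\vee$. Your explicit reduction to the complete case is a welcome extra step that the paper omits (Theorem \ref{LY theorem for Fe} does assume completeness), but the justification of its last implication is off: the support of a module that is not finitely generated need not be specialization-closed, so instead of passing through the generic point one should note that every $\fp\in\Spec R$ has a prime $\fq$ of $\widehat{R}$ lying over it by faithful flatness, and $(\widehat{\calN}^\vee)_\fq\neq 0$ forces $(\calN^\vee)_\fp\neq 0$ because localizing at $\fq$ further localizes the $R$-localization at $\fp$.
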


 \begin{theorem}
 \label{support in mixed}
 Let $R=V[[x_1,\dots,x_d]]$ be a formal power series ring over a complete DVR $(V,\pi V,k)$ of mixed characteristic and $I$ be an ideal of $R$ such that $I\not\subseteq (\pi)$. Assume that $[k:k^p]<\infty$. If 
\begin{enumerate}
\item either $\Coker(H^j_I(R)\xrightarrow{\pi}H^j_I(R))\neq 0$
\item or $\Ker(H^j_I(R)\xrightarrow{\pi}H^j_I(R))\neq 0$,
\end{enumerate}
then 
\[\Spec(R/\pi R)\subseteq \Supp_R(H^j_I(R)^\vee),\]
where $H^j_I(R)^\vee$ denotes the Matlis dual of $H^j_I(R)$. 
 \end{theorem}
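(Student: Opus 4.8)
The plan is to descend to the equal-characteristic-$p$ ring $\bar R := R/\pi R = k[[x_1,\dots,x_d]]$, apply Theorem~\ref{support D-submodule} over $\bar R$, and then return to $R$ by Matlis duality. Since $I \not\subseteq (\pi)$ we have $\bar I := I\bar R \neq 0$, and we may assume $I \neq R$ (otherwise all $H^j_I(R)$ vanish and the statement is vacuous), so $\bar I$ is in fact a proper nonzero ideal of $\bar R$. Because $[k:k^p] < \infty$, the regular local ring $\bar R$ is module-finite over $\bar R^p$, so (with $A = \bar R$) it satisfies the hypotheses of Theorem~\ref{support D-submodule}. By \cite{LyubeznikFModules} every local cohomology module $H^m_{\bar I}(\bar R)$ is an $F$-finite $F$-module over $\bar R$, hence an $F^e$-finite $F^e$-module with $e = 1$; and since it is $\bar I$-power torsion with $\bar I \neq 0$, the generic point $(0)$ of the domain $\bar R$ lies in $\Ass_{\bar R}$ of none of its nonzero subquotients.

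First I would apply $H^\bullet_I(-)$ to $0 \to R \xrightarrow{\pi} R \to \bar R \to 0$ to get
\[\cdots \to H^m_I(R) \xrightarrow{\ \pi\ } H^m_I(R) \to H^m_{\bar I}(\bar R) \to H^{m+1}_I(R) \xrightarrow{\ \pi\ } \cdots,\]
and extract from it, for every $m$, the short exact sequence
\[0 \to \Coker\big(H^m_I(R)\xrightarrow{\pi}H^m_I(R)\big) \to H^m_{\bar I}(\bar R) \to \Ker\big(H^{m+1}_I(R)\xrightarrow{\pi}H^{m+1}_I(R)\big) \to 0.\]
The reduction map $H^m_I(R) \to H^m_{\bar I}(\bar R)$ is compatible with the canonical $D$-module structures on local cohomology of the regular rings $R$ and $\bar R$ (reduction modulo the nonzerodivisor $\pi$ along the regular quotient $\bar R$ is an instance of $D$-module restriction, so it carries $D_R$-linear maps to $D_{\bar R}$-linear ones). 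Hence in the displayed sequence the left-hand term is a $D_{\bar R}$-submodule, and the right-hand term a $D_{\bar R}$-module quotient, of the $F$-finite $F$-module $H^m_{\bar I}(\bar R)$.

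Now, in case (1) I set $\calN := \Coker(H^j_I(R)\xrightarrow{\pi}H^j_I(R))$ and use the sequence with $m = j$, so that $\calN \neq 0$ is a $D_{\bar R}$-submodule of the $F$-finite $F$-module $H^j_{\bar I}(\bar R)$; in case (2) I set $\calN := \Ker(H^j_I(R)\xrightarrow{\pi}H^j_I(R))$ and use the sequence with $m = j-1$ (here $j \geq 1$, since $H^0_I(R) \subseteq R$ is $\pi$-torsion-free), so that $\calN \cong H^{j-1}_{\bar I}(\bar R)/C$ with $C := \Coker(H^{j-1}_I(R)\xrightarrow{\pi}H^{j-1}_I(R))$ a $D_{\bar R}$-submodule of $H^{j-1}_{\bar I}(\bar R)$. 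The point that needs genuine work is to see that $\calN$ is still (a $D_{\bar R}$-submodule of) an $F^{e'}$-finite $F^{e'}$-module for some $e'$: in case (2) this amounts to upgrading $C$ from a $D_{\bar R}$-submodule to an $F^{e'}$-submodule of $H^{j-1}_{\bar I}(\bar R)$, which I would obtain by d\'evissage from Theorem~\ref{simple D submodule}---$C$ has a finite filtration with simple $D_{\bar R}$-subquotients (using that $F^e$-finite $F^e$-modules have finite length as $D$-modules), each of which is an $F^{e_i}$-submodule of $H^{j-1}_{\bar I}(\bar R)$ by Theorem~\ref{simple D submodule}, and a common multiple $e'$ of the $e_i$ makes $C$ an $F^{e'}$-submodule, so that $\calN = H^{j-1}_{\bar I}(\bar R)/C$ is an $F^{e'}$-finite $F^{e'}$-module. (By Example~\ref{F^3 not F} one cannot take $e' = 1$ in general---this is exactly why $F^e$-modules with $e > 1$ are needed here.) In either case $\calN$ is a nonzero $D_{\bar R}$-submodule of an $F^{e'}$-finite $F^{e'}$-module over $\bar R$ with $(0) \notin \Ass_{\bar R}(\calN)$, so Theorem~\ref{support D-submodule} yields $\Supp_{\bar R}(\calN^\vee) = \Spec(\bar R)$.

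Finally I would transfer the conclusion to $R$. From $E_{\bar R}(\bar R/\bar\fm) = (0 :_{E_R(R/\fm)} \pi) = \Hom_R(\bar R, E_R(R/\fm))$ one sees that the Matlis dual of the $\bar R$-module $\calN$ is the same whether formed over $\bar R$ or over $R$, and that its $R$-support is its $\bar R$-support viewed inside $\Spec R$; hence $\Supp_R(\calN^\vee) = \Spec(R/\pi R)$. On the other hand, applying $\Hom_R(-, E_R(R/\fm))$ to $H^j_I(R) \xrightarrow{\pi} H^j_I(R)$ identifies $\calN^\vee$ with the submodule $(0 :_{H^j_I(R)^\vee} \pi)$ of $H^j_I(R)^\vee$ in case (1), and with the quotient $H^j_I(R)^\vee / \pi H^j_I(R)^\vee$ of $H^j_I(R)^\vee$ in case (2). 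Either way $\calN^\vee$ is a subquotient of $H^j_I(R)^\vee$, so $\Spec(R/\pi R) = \Supp_R(\calN^\vee) \subseteq \Supp_R(H^j_I(R)^\vee)$, as desired. Apart from the case-(2) upgrade of $C$ flagged above---the one place where the $F^e$-module theory is essential---the $D$-module restriction compatibility and the Matlis-duality identities are routine.
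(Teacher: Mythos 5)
Your proposal is correct and follows essentially the same route as the paper: reduce mod $\pi$, use the long exact sequence to realize the cokernel as a $D_{\bar R}$-submodule and the kernel as a $D_{\bar R}$-quotient of an $F$-finite local cohomology module over $\bar R$, invoke Theorem~\ref{support D-submodule} (and, for the kernel case, the d\'evissage you describe, which is precisely the content of Corollary~\ref{quotient is Fe-module} that the paper cites directly), and transfer back via Matlis duality. The only cosmetic difference is that you re-derive Corollary~\ref{quotient is Fe-module} inline rather than quoting it, and you make the duality step slightly more explicit by identifying $\calN^\vee$ with $(0:_{H^j_I(R)^\vee}\pi)$ or $H^j_I(R)^\vee/\pi H^j_I(R)^\vee$.
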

 
Theorem \ref{support D-submodule} is a natural extension of the main theorem in \cite{LyubeznikYildirim}. Without any further assumptions on $I$, Theorem \ref{support in mixed} is the best possible since ``$\subseteq$" can be ``$=$" in general; see Remark \ref{why best in mixed} for details.

This article is organized as follows. In \S\ref{background}, we collect some necessary preliminaries on $F^e$-modules and $D$-modules; in \S\ref{F and D}, we prove Theorem \ref{simple D submodule} and its corollary; in \S\ref{Matlis dual}, we apply results proved in \S\ref{F and D} to the investigation of the support of Matlis dual of $D$-modules, especially local cohomology modules.

\subsection*{Acknowledgment} The author thanks the referee for carefully reading the article and for the comments which improve the exposition of the article.

\section{Background and some results on $D$-modules and $F^e$-modules}
\label{background}
Let $A$ be a commutative ring with identity. A ($\ZZ$-linear) differential operator of order 0 is the multiplication by an element of $A$. A differential operator of order $\leq \ell$ is an additive map $\delta:A\to A$ such that the commutator $[\delta,\tilde{a}]=\delta\circ \tilde{a}-\tilde{a}\circ \delta$ is a differential operator of order $\leq \ell-1$, where $\tilde{a}:A\to A$ is the multiplication by $a\in A$, for every $a\in A$. These differential operators form a ring, denoted by $D_{A|\ZZ}$ or simply $D_A$. 

If $k\subseteq A$ is a subring, then the ring of $k$-linear differential operators, denoted by $D_{A|k}$, is the subring of $D_A$ consisting of $k$-linear elements of $D_A$. Given any element $f\in A$, $A_f$ carries a natural $D_{A|k}$-module structure. Consequently, the local cohomology modules $H^j_{\fa}(A)$ carry a natural $D_{A|k}$-module structure for each ideal $\fa$ in $A$.

Assume now that $A$ contains a field of characteristic $p$, and let $A^{p^e}$ be the subring of $A$ consisting of all the $p^e$-th powers of all elements in $A$ for each positive integer $e$.Then, every differential operator $\delta\in D_A$ of order $\leq p^e-1$ is $A^{p^e}$-linear; that is $\delta\in \Hom_{A^{p^e}}(A,A)$. Let $k$ be a perfect subfield of $A$ ({\it e.g.} $k=\ZZ/p\ZZ$). Assume that $A$ is a finite $k[A^p]$-module, then
\[D_{A}=D_{A|k}=\bigcup_e\Hom_{A^{p^e}}(A,A).\]
$\Hom_{A^{p^e}}(A,A)$ is also denoted by $D^{(e)}$ in the literature.

When $A=B[x_1,\dots,x_n]$ or $A=B[[x_1,\dots,x_n]]$ where $B$ is a commutative ring with identity, the ring $D_{A|B}$ can be described explicitly as follows. Set $\partial^{[t]}_i:=\frac{1}{t!}\frac{\partial^t}{\partial x^t_i}$; that is
\[\partial^{[t]}_i(x^s_i)=\begin{cases} 0& s<t\\ \binom{s}{t}x^{s-t}_i&s\geq t \end{cases}\]
Then $D_{A|B}$ is the ring extension of $A$ generated by $\partial^{[t]}_i$ for all $i$ and all $t\geq 1$. Furthermore, if $B$ is a perfect field of characteristic $p$, then $D^{(e)}$ is the ring extension of $A$ generated by $\partial^{[t]}_i$ for all $i$ and all $t\leq p^e-1$.

\begin{remark}
\label{kernel and cokernel}
Given this explicit descriptions of the rings of differential operators, one can check the following ({\it cf.} \cite[2.1]{BBLSZ1} for details). Assume that $R=V[x_1,\dots,x_n]$ or $R=V[[x_1,\dots,x_n]]$ where $V=(V,\pi V,k)$ is a DVR with a uniformizer $\pi$. Set $\bar{R}=R/(\pi)$. Given each $D_{R|V}$-module $M$, the multiplication map $M\xrightarrow{\pi}M$ is $D_{R|V}$-linear as $\pi\in V$; consequently, the submodule $\Ann_M(\pi)$ and the quotient module $M/\pi M$ are naturally $D_{R|V}$-modules and $D_{\bar{R}|k}$-modules. The short exact sequence $0\to R\xrightarrow{\pi}R\to \bar{R}\to 0$ induces a long exact of local cohomology modules
\[\cdots \to H^{j-1}_I(\bar{R})\to H^j_I(R)\xrightarrow{\pi}H^j_I(R)\to H^{j}_I(\bar{R})\to \cdots\]
which is an exact sequence in the category of $D_{R|V}$-modules, for each ideal $I$ of $R$. In particular, the modules $\Coker(H^j_I(R)\xrightarrow{\pi}H^j_I(R))$ and $\Ker(H^j_I(R)\xrightarrow{\pi}H^j_I(R))$ are naturally $D_{R|V}$-modules and $D_{\bar{R}|k}$-modules. Consequently, the natural maps
\[\Coker(H^j_I(R)\xrightarrow{\pi}H^j_I(R))\to H^{j}_I(\bar{R}),\ H^{j-1}_I(\bar{R})\to \Ker(H^j_I(R)\xrightarrow{\pi}H^j_I(R))\]
are morphisms in the category of $D_{R|V}$-modules and morphisms in the category of $D_{\bar{R}|k}$-modules.
\end{remark}

Let $A$ be a commutative ring that contains a field of characteristic $p$. Then there is a natural functor on the category of $A$-modules called the Peskine-Szpiro functor and defined as follows. Let $F^e_*A$ denote the $A$-module whose underlying abelian group is the same as $A$ and whose $A$-module structure is induced by the $e$-th Frobenius $A\xrightarrow{a\mapsto a^{p^e}}A$. The Psekine-Szpiro functor $F^e_A(-)$ on the category of $A$-modules is defined by
\[F^e_A(M)=F^e_*A\otimes_AM.\]

\begin{remark}
\label{equivalence between De and R}
Assume $R$ is a noetherian regular ring of characteristic $p$. Then a classical theorem due to Kunz (\cite{KunzCharacterizationsOfRegularLocalRings}) asserts that the Peskine-Szpiro functor $F^e_R$ is an exact functor.

Moreover, assume that $R$ is a finite generated $R^p$-module. Then the category of $R$-modules is equivalent to the category of $D^{(e)}$-modules (\cite[Proposition 2.1]{MontanerBlickleLyubeznik2005}). The functor from the category of $R$-modules to the category of $D^{(e)}$-modules is precisely the Peskine-Szpiro functor $F^e_R$. Since one can identify $\Hom_{R^{p^e}}(R,R)$ with $\Hom_R(F^e_*R,F^e_*R)$, the $D^{(e)}$-module structure on $F^e_R(M)=F^e_*R\otimes_RM$ is induced by the action on $F^e_*R$. We refer the reader to \cite{MontanerBlickleLyubeznik2005} for details.
\end{remark}

The following result, \cite[Proposition 2.3]{MontanerBlickleLyubeznik2005}, will be useful in the sequel.
\begin{theorem}
\label{F-equiv on D-modules}
Let $R$ be a noetherian regular ring of characteristic $p$. Assume that $R$ is a finitely generated $R^p$-module. Then the Peskine-Szpiro functor $F^e_R$ is an equivalence of the category of $D_R$-modules with itself.
\end{theorem}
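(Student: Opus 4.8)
The plan is to bootstrap from the Frobenius-descent equivalences $F^{\ell}_R\colon \mathrm{Mod}_R\xrightarrow{\ \sim\ }\mathrm{Mod}_{D^{(\ell)}}$ of Remark~\ref{equivalence between De and R}, using that $D_R=\bigcup_{\ell\ge 0}D^{(\ell)}$ is the increasing union of the subrings $D^{(\ell)}=\mathrm{Hom}_{R^{p^{\ell}}}(R,R)$ (see \S\ref{background}; this presentation of $D_R$ is available here because $R$ has characteristic $p$ and is module-finite over $R^p$). Since $D_R$ is the filtered union of the $D^{(\ell)}$, restriction of scalars identifies $\mathrm{Mod}_{D_R}$ with the category of systems $(\mathcal N_{\ell})_{\ell\ge 0}$ in which $\mathcal N_{\ell}$ is a $D^{(\ell)}$-module and $\mathcal N_{\ell}=\mathrm{Res}^{D^{(\ell+1)}}_{D^{(\ell)}}\mathcal N_{\ell+1}$, a $D_R$-module corresponding to the system of its restrictions.

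First I would promote $F^e_R$ to a functor $\mathrm{Mod}_{D^{(\ell)}}\to\mathrm{Mod}_{D^{(\ell+e)}}$ for each $\ell$. The canonical isomorphism $F^e_*R\otimes_R F^{\ell}_*R\cong F^{e+\ell}_*R$ yields a natural isomorphism of functors $F^e_R\circ F^{\ell}_R\cong F^{e+\ell}_R$; so, writing a given $D^{(\ell)}$-module as $F^{\ell}_R(N)$ for a unique (functorial) $R$-module $N$, the $R$-module $F^e_R(F^{\ell}_R(N))=F^{e+\ell}_R(N)$ acquires a canonical $D^{(\ell+e)}$-module structure. Inserted between the equivalences $F^{\ell}_R$ and $F^{\ell+e}_R$ this functor becomes the identity of $\mathrm{Mod}_R$, so it is an equivalence $\mathrm{Mod}_{D^{(\ell)}}\xrightarrow{\ \sim\ }\mathrm{Mod}_{D^{(\ell+e)}}$, with quasi-inverse obtained from the inverse descent equivalences.

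Next I would verify that these level-wise functors commute with the restriction functors $\mathrm{Res}^{D^{(\ell+1)}}_{D^{(\ell)}}$: on underlying $R$-modules everything in sight is $F^e_R$, and one checks, by tracing through the isomorphism $F^{\ell+1}_*R\cong F_*R\otimes_R F^{\ell}_*R$ together with the fact that the inclusion $D^{(\ell)}\hookrightarrow D^{(\ell+1)}$ is the identity on underlying operators $R\to R$, that the two resulting $D^{(\ell+e)}$-module structures agree. Granting this, the level-wise equivalences assemble into an equivalence of $\mathrm{Mod}_{D_R}$ with itself (shift the level up by $e$ and apply $F^e_R$ in each degree; the quasi-inverse shifts the level down by $e$), whose underlying functor on $R$-modules is, by construction, $F^e_R$. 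I expect the genuine content to lie in this last compatibility step — the bookkeeping that the descent equivalences at different levels are compatible with the filtration $D^{(0)}\subseteq D^{(1)}\subseteq\cdots$ of $D_R$ — since everything else follows formally from Remark~\ref{equivalence between De and R} and the multiplicativity $F^e_*R\otimes_R F^{\ell}_*R\cong F^{e+\ell}_*R$.
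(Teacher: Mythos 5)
Your argument is correct and is essentially the proof of the result being quoted: the paper gives no proof of Theorem \ref{F-equiv on D-modules}, citing \cite[Proposition 2.3]{MontanerBlickleLyubeznik2005}, and the argument there is exactly yours — Frobenius descent at each level of the filtration $D_R=\bigcup_\ell D^{(\ell)}$, the multiplicativity $F^e_R\circ F^\ell_R\cong F^{e+\ell}_R$, and the compatibility of the level-wise equivalences with the restrictions $D^{(\ell)}\hookrightarrow D^{(\ell+1)}$.
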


Considerations on the Peskine-Szpiro functor have proven to be fruitful in the investigation of rings of prime characteristic $p$. When $R$ is regular, the theory of $F$-modules is introduced in \cite{LyubeznikFModules}. Since this theory is readily adapted to the $e$-th Peskine-Szpiro functor $F^e(-)$, we opt to explain here the theory of $F^e$-modules. 

For the rest of the section, $R$ denotes a noetherian regular ring of prime characteristic $p$.

\begin{definition} Let $e$ be a positive integer. 
\begin{enumerate}
\item An $R$-module $\calM$ is an \emph{$F^e$-module} if there is an $R$-module isomorphism
\[\theta: \calM\to F^e(\calM)=F^e_*R\otimes_R\calM\]
called the structure isomorphism.

When $e=1$, we will write $F$ instead of $F^{1}$ whenever the context is clear.

\item If $(\calM,\theta_\calM)$ and $(\calN,\theta_\calN)$ are $F^e$-modules, then an \emph{$F^e$-module morphism} from $(\calM,\theta_\calM)$ to $(\calN,\theta_\calN)$ consists of the the following commutative diagram:
\[\xymatrix{
\calM \ar[r]^{\varphi} \ar[d]^{\theta_\calM} & \calN\ar[d]^{\theta_\calN}\\
F^e(\calM) \ar[r]^{F^e(\varphi)} & F^e(\calN)
}\]
We will simply write this $F^e$-module morphism as $\varphi:\calM\to \calN$ whenever the context is clear.

\item A \emph{generating morphism} of an $F^{e}$-module is an $R$-module homomorphism $\beta: M\to F^e(M)$, where $M$ is an $R$-module, such that $\calM$ is the direct limit of the top row of the following commutative diagram
\[\xymatrix{
M \ar[r]^{\beta} \ar[d]^{\beta} & F^e(M) \ar[r]^{F^e(\beta)} \ar[d]^{F^e(\beta)} & F^{2e}(M) \ar[r] \ar[d] &\cdots\\
F^e(M) \ar[r]^{F^e(\beta)} & F^{2e}(M) \ar[r]^{F^{2e}(\beta)} & F^{3e}(M)\ar[r] & \cdots
}\]
and the structure isomorphism $\theta:\calM\to F^e(\calM)$ is induced by the vertical morphism in the diagram.

\item An $F^e$-module $\calM$ is \emph{$F^e$-finite} if it admits a generating morphism $\beta:M\to F^e(\calM)$ where $M$ is a finitely generated $R$-module.
\end{enumerate}
We will denote the category of $F^e$-modules by $\sF^e$.
\end{definition}

Results on $F$-modules in the literature, {\it e.g.} \cite{LyubeznikFModules} and \cite{MontanerBlickleLyubeznik2005}, can be readily extended to $F^e$-modules by simply replacing the functor $F(-)$ with the functor $F^e(-)$. Before proceeding to properties of $F^e$-modules, we would like to explain one of the motivations behind introducing these modules and hopefully to answer the natural question: why not just work with $F$-modules?

\begin{example}
\label{F^3 not F}
Let $R=\FF_{11}[x,y,z]$ and let $f=x^7+y^7+z^7$. Denote $H^1_{(f)}(R)$ by $\calH$. Then \cite[7.4]{KatzmanMaSmirnovZhang2018} shows that
\[\ell_{\sD}(\calH)>\ell_{\sF}(\calH),\]
where $\ell_{\sD}(\calH)$ (or $\ell_{\sF}(\calH)$, respectively) denotes the length of $\calH$ in the category of $\sD$-modules (or in $\sF$, respectively). Let $H$ be a simple $D$-submodule of $\calH$. If $H$ is an $F$-submodule of $\calH$, then it follows from \cite[Theorem~2.8]{LyubeznikFModules} that $H$ is an $F$-finite $F$-submodule of $\calH$ and consequently $\calH/H$ is an $F$-finite $F$-module. Note $\ell_{\sD}(\calH/H)>\ell_{\sF}(\calH/H)$. Continuing this process, after at most $\ell_{\sF}(\calH)$ steps, one can see that there is an $F$-finite $F$-module $\calH'$ (a quotient of $\calH$ in the category of $F$-modules) such that $\calH'$ admits a simple $D$-submodule that is not an $F$-submodule of $\calH'$. (Similarly, one can also deduce that $\calH$ admits a $\sD$-submodule which is not an $F$-submodule.) 
\end{example}

Example \ref{F^3 not F} shows that the theory of $F^e$-modules may be applicable to $\sD$-submodules of an $F$-finite $F$-module which may not be $F$-submodules in general.

\begin{remark}
Assume that $(\calM,\theta)$ is an $F^e$-module for a positive integer $e$. Then, for every positive integer $t$, the composition
\[\calM\xrightarrow{\theta}F^e(\calM)\xrightarrow{F^e(\theta)}\cdots\to F^{te}(\calM)\]
is also an $R$-module isomorphism. Hence $\calM$ is an $F^{te}$-module for every positive integer $t$. In particular, an $F$-module is also an $F^e$-module for every positive integer $e$. Consequently, all local cohomology modules $H^j_I(R)$ (and iterated local cohomology modules) are $F^e$-modules for every positive integer $e$.

Assume that $(\calM,\theta)$ is an $F^e$-module for a positive integer $e$. Then so is $F^t(\calM)$ for every positive integer $t$ since $F^t(\calM)\cong F^t(F^e(\calM))=F^e(F^t(\calM))$. 

Let $e,f$ be positive integers such that $e|f$. Then $\sF^e$ can be naturally viewed as a subcategory of $\sF^{f}$. Let $\calM$ be an $F^e$-module. By an $F^f$-submodule $\calN$ of $\calM$ we mean a sub-object of $\calM$ when $\calM$ is viewed as an object in $\sF^f$.
\end{remark}

\begin{remark}
\label{D-structure on F-module}
Every $F^e$-module admits a natural $D$-module structure. This follows from Remark \ref{equivalence between De and R}.
Let $\delta$ be a differential operator. Then there exists an positive integer $t$ such that its order (as a differential operator) is less than $te$. Let $\alpha_t$ denote the composition 
\[\calM\xrightarrow{\theta}F^e(\calM)\xrightarrow{F^e(\theta)}\cdots\to F^{te}(\calM)\]

Given an arbitrary element $m\in \calM$, write $\alpha_t(m)=\sum_ir_i\otimes m_i$. Then, for every element $m\in \calM$, set \[\delta\cdot m:=\alpha_t^{-1}(\sum_i(\delta\cdot r_i)\otimes m_i)\]

Whenever we view an $F^e$-module as a $D_R$-module, we always refer to the $D_R$-module structure specified in the previous paragraph. Under this $D_R$-module structure, an $F^e$-module morphism between any two $F^e$-modules is also a $D_R$-module morphism.
\end{remark}

We now collect some results on $F^e$-modules which are natural analogues of corresponding results on $F$-modules in the literature. 
\begin{remark}
\label{D-results on Fe}
Let $R$ be a noetherian regular ring of characteristic $p$ that is module-finite over $R^p$. Let $\calM$ be an $F^e$-module for a positive integer $e$.
\begin{enumerate}
\item The $F^e$-finite modules form a full abelian subcategory of the category of $F^e$-modules which is closed under formation of submodules, quotient modules and extensions. When $e=1$, this is \cite[Theorem 2.8]{LyubeznikFModules}. When $e$ is an arbitrary positive integer, the same proof goes through (by replacing $F(-)$ with $F^e(-)$).
 
\item The structure isomorphism $\theta:\calM\to F^e(\calM)$ is $D_R$-linear, where the $D_R$-module structure is as in Remark \ref{D-structure on F-module}. When $e=1$, this is \cite[Lemma 2.4]{MontanerBlickleLyubeznik2005}. When $e$ is an arbitrary positive integer, the same proof goes through (by replacing $F(-)$ with $F^e(-)$).

\item Assume further that $R$ is of finite type over a regular local ring $A$ such that $A$ is module-finite over $A^p$. Then every $F^e$-finite $F^e$-module has finite length in $\sF^e$ and in the category of $D_R$-modules, for each positive integer $e$. When $e=1$, this is \cite[Theorem 3.2]{LyubeznikFModules} and \cite[Theorem 2.5]{MontanerBlickleLyubeznik2005}, respectively. When $e$ is an arbitrary positive integer, the same proofs go through (by replacing $F(-)$ with $F^e(-)$).
\end{enumerate}
\end{remark}

\section{Interactions between $F^e$-modules and $D$-modules}
\label{F and D}
The main goal of this section is to prove Theorem \ref{simple D submodule}. We begin with the following observation.

\begin{proposition}
\label{EK remark}
Let $R$ be a noetherian regular ring of finite type over a regular local ring $A$ such that $A$ is module-finite over $A^p$ and let $\calM$ be an $F^e$-finite $F^e$-module. Let $\calN$ be a $D_R$-submodule of $\calM$. Assume that\footnote{Here we identify $F^e(\calN)$ with an $R$-submodule of $\calM$ under the isomorphism $F^e(\calM)\cong \calM$.} $F^e(\calN)\subseteq \calN$. Then $\calN$ is an $F^e$-submodule of $\calM$.
\end{proposition}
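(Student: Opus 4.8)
The plan is to produce a generating morphism for $\calN$ by restricting the structure isomorphism of $\calM$. Let $\theta_\calM\colon \calM\xrightarrow{\ \sim\ } F^e(\calM)$ be the structure isomorphism, and recall that by Remark \ref{D-results on Fe}(2) the map $\theta_\calM$ is $D_R$-linear. Since $\calN$ is a $D_R$-submodule of $\calM$, the module $F^e(\calN)$, viewed (via $\theta_\calM$) as an $R$-submodule of $F^e(\calM)\cong \calM$, is in fact a $D_R$-submodule of $\calM$: indeed $F^e(\calN)$ is the image under the $D_R$-equivalence $F^e$ (Theorem \ref{F-equiv on D-modules}) of the $D_R$-submodule $\calN$, and $\theta_\calM$ identifies this image as a $D_R$-submodule of $\calM$. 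First I would use the hypothesis $F^e(\calN)\subseteq \calN$ together with the exactness of $F^e$ on the regular ring $R$ (Remark \ref{equivalence between De and R}) to conclude that applying $F^e$ to the inclusion $F^e(\calN)\hookrightarrow\calN$ gives a chain of $R$-submodules
\[
F^e(\calN)\subseteq \calN,\quad F^{2e}(\calN)\subseteq F^e(\calN),\quad \dots
\]
inside $\calM$, where all identifications are made using the iterated structure isomorphism $\alpha_t\colon \calM\to F^{te}(\calM)$ from Remark \ref{D-structure on F-module}.

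Next I would restrict $\theta_\calM$ to obtain an $R$-module map $\beta\colon \calN\to F^e(\calN)$ (using $F^e(\calN)\subseteq\calN$, we may also regard $\beta$ as an endomorphism-like map $\calN\to\calN$, but the cleaner formulation is $\beta\colon\calN\to F^e(\calN)$). The key claim is that $\beta$ is a generating morphism of an $F^e$-submodule of $\calM$, i.e.\ that the direct limit of
\[
\calN\xrightarrow{\ \beta\ } F^e(\calN)\xrightarrow{\ F^e(\beta)\ } F^{2e}(\calN)\xrightarrow{\ \ } \cdots
\]
is naturally a subobject of $\calM$ in $\sF^e$. Since each $F^{te}(\calN)$ sits inside $\calM$ compatibly (by the nested inclusions above, which are themselves compatible with the transition maps because $\beta$ is the restriction of $\theta_\calM$ and $F^e$ is a functor), the colimit is just the union $\bigcup_t F^{te}(\calN)$ taken inside $\calM$ — call it $\calN'$ — and the structure isomorphism of $\calM$ restricts to an isomorphism $\calN'\to F^e(\calN')$. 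Here the exactness of $F^e$ is used again: $F^e$ commutes with the directed colimit and preserves the inclusions, so $F^e(\calN')=\bigcup_t F^{(t+1)e}(\calN)=\calN'$ as submodules of $F^e(\calM)\cong\calM$.

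It then remains to check that $\calN'=\calN$. One inclusion is immediate since $\beta$ maps $\calN$ into $F^e(\calN)\subseteq\calN$, so all the $F^{te}(\calN)$ lie in $\calN$ and hence $\calN'\subseteq\calN$. For the reverse inclusion I would argue that $\calN'$, being an $F^e$-submodule of $\calM$ containing the image of $\calN$ under... — more precisely, I would show $\calN\subseteq\calN'$ by noting that $\calN = \theta_\calM^{-1}(F^e(\calN))$ is contained in $\theta_\calM^{-1}(F^e(\calN'))=\calN'$ once we know $F^e(\calN)\subseteq F^e(\calN')$, which holds because $\calN\subseteq\calN'$... — this is circular, so instead the honest route is: $\calN'\supseteq F^0(\calN)$ is false as stated (the colimit starts at $F^e(\calN)$), so one must be slightly careful and observe that the colimit of the diagram, as an abstract $F^e$-module, maps to $\calM$ with image $\bigcup_t F^{te}(\calN)$, and the canonical map from $\calN$ (the zeroth term of the top row) into this colimit lands in $\calM$ as the inclusion $\calN\hookrightarrow\calM$ precisely because $\theta_\calM$ is the structure map. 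Thus the image is $\bigcup_{t\geq 0}F^{te}(\calN)=\calN$ (the $t=0$ term being $\calN$ itself), giving $\calN'=\calN$, and $\calN$ is the $F^e$-module with generating morphism $\beta$, sitting inside $\calM$ as an $F^e$-submodule.

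The main obstacle I anticipate is bookkeeping with the identifications: one must consistently track how $F^{te}(\calN)$ is being regarded as a submodule of $\calM$ (always via the iterated structure isomorphism $\alpha_t$), verify that the transition maps in the colimit diagram are compatible with these identifications, and confirm that the resulting $F^e$-module structure on $\calN$ agrees with the one it inherits as a subobject of $\calM$. None of this is deep — it is the same argument as in the $e=1$ case, essentially \cite[Theorem 2.8]{LyubeznikFModules} applied to the generating morphism $\beta=\theta_\calM|_{\calN}$ — but stating it cleanly requires care with the colimit identification.
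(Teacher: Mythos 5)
There is a genuine gap at the very first step, and it makes the rest of the argument circular. Unwinding the footnote's identification, the hypothesis $F^e(\calN)\subseteq\calN$ says that $\theta_\calM^{-1}(F^e(\calN))\subseteq\calN$, i.e.\ $F^e(\calN)\subseteq\theta_\calM(\calN)$ as submodules of $F^e(\calM)$. This is an inclusion in the \emph{opposite} direction from the one you need: for your map $\beta=\theta_\calM|_{\calN}\colon\calN\to F^e(\calN)$ to be defined at all you must know $\theta_\calM(\calN)\subseteq F^e(\calN)$, and once you know that, combined with the hypothesis you already have $\theta_\calM(\calN)=F^e(\calN)$, which \emph{is} the statement that $\calN$ is an $F^e$-submodule. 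So the existence of $\beta$ is equivalent to the conclusion rather than a consequence of the hypothesis; the subsequent colimit bookkeeping (which you yourself flag as circular at one point) cannot repair this. It is also telling that your argument never uses the finite-type-over-$A$ hypothesis, nor that $\calN$ is a $D_R$-submodule, nor that $\calM$ is $F^e$-finite --- and the statement is not expected to hold without them.

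The paper's proof goes the other way around. From $F^e(\calN)\subseteq\calN$ and the exactness of $F^e$ one obtains a \emph{descending} chain $\calN\supseteq F^e(\calN)\supseteq F^{2e}(\calN)\supseteq\cdots$ of $D_R$-submodules of $\calM$ (each term is a $D_R$-submodule because $F^e$ is an equivalence on $D_R$-modules, Theorem \ref{F-equiv on D-modules}). Since $\calM$ has finite length as a $D_R$-module by Remark \ref{D-results on Fe} --- this is exactly where the hypothesis on $A$ enters --- the chain stabilizes, so $F^{te}(\calN)=F^{te}(F^e(\calN))$ for some $t$. Because $F^{te}$ is exact and faithful, an inclusion of submodules that becomes an equality after applying $F^{te}$ was already an equality; hence $F^e(\calN)=\calN$ inside $\calM$, i.e.\ $\theta_\calM$ restricts to an isomorphism $\calN\to F^e(\calN)$, which is the desired conclusion. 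To salvage your write-up, replace the construction of $\beta$ with this descending-chain and finite-length argument.
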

\begin{proof}
Since $F^e(\calN)$ is naturally a $D_R$-submodule of $\calM$ (due to Theorem \ref{F-equiv on D-modules}) and $F^e(\calN)\subseteq \calN$, we have a descending chain of $D_R$-submodules of $\calM$:
\[\calN\supseteq F^e(\calN)\supseteq F^{2e}(\calN)\supseteq \cdots.\]
Since $\calM$ has finite length in the category of $D_R$-modules (Remark \ref{D-results on Fe}), this chain must terminate in finitely many steps; that is $F^{te}(\calN)=F^{(t+1)e}(\calN)=F^{te}(F^e(\calN))$ for an integer $t$. Hence $\calN\cong F^e(\calN)$ which completes the proof.
\end{proof}

We are now  in position to prove Theorem \ref{simple D submodule}, whose proof is inspired by the proof of \cite[Theorem 5.6]{LyubeznikFModules}.

\begin{proof}[Proof of Theorem \ref{simple D submodule}]
Since $\calM$ is an $F^e$-module, $F^{te}(\calN)\subseteq F^{te}(\calM)\cong \calM$ for each positive integer $t$. We will view $F^{te}(\calN)$ as a $D$-submodule of $\calM$. It follows from Theorem \ref{F-equiv on D-modules} that $F^{te}(\calN)$ is also a simple $D$-submodule of $\calM$ for every positive integer $t$. Let $t$ be the least positive integer such that 
\[\calN+F^e(\calN)+\cdots+F^{(t-1)e}(\calN)=\calN\oplus F^e(\calN)\oplus \cdots \oplus F^{(t-1)e}(\calN)\] 
that is, $t$ is the least positive integer such that
\[(\calN+F^e(\calN)+\cdots+F^{(t-1)e}(\calN))\cap F^{te}(\calN)\neq \emptyset.\] 
Set \[\calL:=\calN+F^e(\calN)+\cdots+F^{(t-1)e}(\calN)=\calN\oplus F^e(\calN)\oplus \cdots \oplus F^{(t-1)e}(\calN).\] 
By the construction of $\calL$, one sees that $\calL$ is a semi-simple $D_R$-module.

We claim that $\calL$ is an $F^e$-submodule of $\calM$ and we reason as follows. Since $F^{te}(\calN)$ is also a simple $D_R$-module by Theorem \ref{F-equiv on D-modules} and $F^{te}(\calN)\cap \calL\neq \emptyset$, we have 
\[F^{te}(\calN)\subset \calL.\]
Consequently $F^e(\calL)\subseteq \calL$. It follows from Proposition \ref{EK remark} that $\calL$ is an $F^e$-submodule of $\calM$ and hence is also an $F^e$-finite $F$-finite module by Remark \ref{D-results on Fe}.

This shows that $\calN$ is a simple $D_R$-submodule of an $F^e$-finite $F^e$-module $\calL$ such that $\calL=\calN\oplus \cdots \oplus F^{(t-1)e}(\calN)$, where $\calN, \dots, F^{(t-1)e}(\calN)$ are simple $D$-submodules of $\calL$. Since $\calL$ is a semi-simple $D_R$-module stable under $F^e(-)$ and $F^e(-)$ is an equivalence on the category of $D$-modules (Theorem \ref{F-equiv on D-modules}), the functor $F^e(-)$ cycles through its direct summands $\calN, \dots, F^{(t-1)e}(\calN)$. Therefore, there exists a positive integer $e'$ such that $\calN\cong F^{e'e}(\calN)$. This finishes the proof.
\end{proof}


\begin{corollary}
\label{quotient is Fe-module}
Let $R$ be as in Theorem \ref{simple D submodule}. Assume that $\calN$ is a $D_R$-module quotient of an $F^e$-finite $F^e$-module $\calM$. Then there exists a positive integer $e'$ such that $\calN$ is an $F^{e'}$-finite $F^{e'}$-module. 
\end{corollary}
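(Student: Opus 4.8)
The plan is to reduce the statement about $D_R$-module quotients to Theorem \ref{simple D submodule} via a d\'evissage on the $D_R$-module length. First I would recall from Remark \ref{D-results on Fe}(3) that an $F^e$-finite $F^e$-module $\calM$ has finite length $\ell_{D_R}(\calM)$ in the category of $D_R$-modules; since the kernel $\calK$ of the $D_R$-surjection $\calM\to\calN$ is a $D_R$-submodule, both $\calK$ and $\calN$ have finite $D_R$-length as well, and we will induct on $\ell_{D_R}(\calK)$.

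For the base case $\ell_{D_R}(\calK)=0$ we have $\calN\cong\calM$ as $D_R$-modules, and there is nothing to prove (take $e'=e$). For the inductive step, pick a simple $D_R$-submodule $S\subseteq\calK$. By Theorem \ref{simple D submodule} applied to the simple $D_R$-submodule $S$ of the $F^e$-finite $F^e$-module $\calM$, there is a positive integer $e_1$ such that $S$ is an $F^{e_1 e}$-submodule of $\calM$; set $g=e_1 e$, so $\calM$ is in particular an $F^{g}$-finite $F^{g}$-module (Remark after the definition, since $e\mid g$) and $S$ is an $F^{g}$-submodule. Then $\calM/S$ is an $F^{g}$-finite $F^{g}$-module by Remark \ref{D-results on Fe}(1), and $\calN$ is a $D_R$-module quotient of $\calM/S$ whose kernel is $\calK/S$, which has strictly smaller $D_R$-length than $\calK$. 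By the inductive hypothesis applied to the surjection $\calM/S\twoheadrightarrow\calN$ of $D_R$-modules (with $\calM/S$ now $F^{g}$-finite), there is a positive integer $e'$ such that $\calN$ is an $F^{e'}$-finite $F^{e'}$-module, which completes the induction and hence the proof.

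The one point requiring a little care — and the likely main obstacle — is bookkeeping the exponent: each application of Theorem \ref{simple D submodule} enlarges the relevant Frobenius iterate, so the "$e$" changes at every step of the d\'evissage, and one must make sure the inductive hypothesis is stated for an arbitrary starting exponent (not just the original $e$) so that it can be reapplied to $\calM/S$ as an $F^{g}$-module. Since the corollary only asserts existence of \emph{some} $e'$, this causes no real difficulty: one can either formulate the induction with the exponent as a free parameter, or simply note that after finitely many steps the process terminates and take $e'$ to be the final (or a common multiple of the) exponents produced along the way.
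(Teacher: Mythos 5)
Your proposal is correct and follows essentially the same dévissage as the paper: pick a simple $D_R$-submodule of the kernel, apply Theorem \ref{simple D submodule} to make it an $F^{e_1e}$-submodule, pass to the quotient, and induct (the paper inducts on $\ell_{\sD}(\calM)$ rather than on the length of the kernel, which is an immaterial difference). Your explicit remark that the inductive hypothesis must be stated with the Frobenius exponent as a free parameter is a point the paper's write-up leaves implicit, and you handle it correctly.
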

\begin{proof}
We will use induction on the length of $\calM$ as a $D_R$-module; note that $\calM$ has finite length in the category of $D_R$-modules according to Remark \ref{D-results on Fe}.

When $\calM$ is a simple $D_R$-module, then either $\calN=0$ or $\calN=\calM$. The conclusion is clear.

Let $\ell_{\sD}(\calM)$ denote the $D_R$-module length of $\calM$. Assume now $\ell_{\sD}(\calM)\geq 2$ and the theorem has been proved for all $F$-finite $F$-modules with $D_R$-module length $\leq \ell_{\sD}(\calM)-1$. Since $\calN$ is a $D_R$-module quotient, there is a $D_R$-submodule $\calL$ of $\calM$ such that $\calN=\calM/\calL$. Since $\ell_{\sD}(\calL)<\infty$, there is a simple $D_R$-submodule $\calL'$ of $\calL$. Since $\calL'$ is a simple $D_R$-submodule of $\calM$,  by Theorem \ref{simple D submodule} $\calL'$ is an $F^{te}$-submodule of $\calM$ for a positive integer $t$. Consequently, $\calM/\calL'$ is an $F^{te}$-finite $F^{te}$-finite module. Set $\bar{\calM}:=\calM/\calL'$ and $\bar{\calL}:=\calL/\calL'$. Since $\ell_{\sD}(\bar{\calM})<\ell_{\sD}(\calM)$, by induction $\bar{\calM}/\bar{\calL}$ is an $F^{e'}$-finite $F^{e'}$-module for a positive integer $e'$. Since $\calN=\calM/\calL\cong \bar{\calM}/\bar{\calL}$, this completes the proof.
\end{proof}

\begin{remark}
When $R=k[x_1,\dots,x_n]$ where $k$ is a field of characteristic $p$, one can also develop the notions of graded $F^e$-modules and graded $F^e$-finite $F^e$-modules and to extend results on graded $F$-modules to graded $F^e$-modules. For instance, one can show that a graded $F^e$-finite $F^e$-module is also an Eularian graded $D_R$-module; the interested reader is referred to \cite{EulerianDModules} for the notion of Eulerian graded $D$-modules. We opt not to pursue this in the current article.
\end{remark}

\section{Applications to Matlis dual}
\label{Matlis dual}
Prompted by the work of Hellus in \cite{Hellus}, Lyubeznik and Yildirim conjectured (in \cite[Conjecture 1]{LyubeznikYildirim}) that, if $R$ is a noetherian regular local ring and $H^j_I(R)\neq 0$ where $I$ is an ideal of $R$, then $\Supp_R(H^j_I(R)^\vee)=\Spec(R)$. Here $H^j_I(R)^\vee$ denotes the Matlis dual of $H^j_I(R)$. This conjecture is proved in \cite{LyubeznikYildirim} in characteristic $p$. In mixed characteristic, this conjecture is shown to be false as stated (\cite[4.5]{DattaSwitalaZhang}). One may notice that the example in \cite[4.5]{DattaSwitalaZhang} ({\it cf.} Remark \ref{why best in mixed}) is the kernel of multiplication by the uniformizer $\pi$ on a local cohomology module $H^j_I(R)$; where $(R,V)$ is a formal power series ring over a complete discrete valuation ring $V$ with a uniformizer $\pi$. The main purpose of this section is to prove Theorem \ref{support in mixed} which generalizes \cite[4.5]{DattaSwitalaZhang}.

We begin with the following extension of \cite[Theorem 1.1]{LyubeznikYildirim}.
\begin{theorem}
\label{LY theorem for Fe}
Let $(R,\fm)$ be a complete regular local ring of characteristic $p$ and let $\calM$ be an $F^e$-finite $F^e$-module for a positive integer $e$. Assume that $(0)\notin \Ass_R(\calM)$. Then
\[\Supp(\Hom_R(\calM,E_R(R/\fm)))=\Spec(R),\]
where $E_R(R/\fm)$ denotes the injective hull of $R/\fm$. 
\end{theorem}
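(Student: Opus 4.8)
The plan is to reduce Theorem \ref{LY theorem for Fe} to the case of $F$-finite $F$-modules already handled in \cite[Theorem 1.1]{LyubeznikYildirim}. The obstacle is purely cosmetic: \cite[Theorem 1.1]{LyubeznikYildirim} is stated for $F$-modules, i.e.\ $e=1$, whereas here $\calM$ is only an $F^e$-module. But the proof of \cite[Theorem 1.1]{LyubeznikYildirim} never uses that $e=1$ in any essential way; as noted repeatedly in Remark \ref{D-results on Fe}, one simply replaces the functor $F(-)$ everywhere by $F^e(-)$ and the argument goes through verbatim. So the first thing I would do is to record that \cite[Theorem 1.1]{LyubeznikYildirim} holds, with the same proof, for $F^e$-finite $F^e$-modules over a complete regular local ring of characteristic $p$, under the hypothesis $(0)\notin\Ass_R(\calM)$.

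For completeness I would also sketch why the statement is true, to make clear what the hypothesis $(0)\notin\Ass_R(\calM)$ is doing. Recall that a complete regular local ring $R$ of characteristic $p$ satisfies the hypotheses of Theorem \ref{simple D submodule} and Remark \ref{D-results on Fe} (take $A=R$, which is module-finite over $A^p$ since $R$ is complete, hence $F$-finite). Suppose, for contradiction, that $\Supp_R(\calM^\vee)\neq\Spec(R)$, i.e.\ there is a prime $\fp$ with $(\calM^\vee)_\fp=0$. Taking Matlis duals and using that Matlis duality is faithful on the category of modules whose localizations are ``small enough,'' one translates this into a statement that forces $\Gamma_\fp$-type annihilation on $\calM$ itself. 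The key point is the analogue, for $F^e$-finite $F^e$-modules, of the fact that the set of associated primes of an $F$-finite $F$-module is finite (a Frobenius-stable, hence finite, set); combined with the structure of $\calM$ as a union of Frobenius powers of a finitely generated submodule, one shows that $(\calM^\vee)_\fp=0$ propagates down to $\fp=(0)$, yielding $(0)\in\Ass_R(\calM)$, a contradiction. All of these ingredients are the $F^e$-versions of the $F$-module facts invoked in \cite{LyubeznikYildirim}, and each goes through by the substitution $F\rightsquigarrow F^e$.

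Concretely, the steps I would carry out are: first, cite \cite[Theorem 1.1]{LyubeznikYildirim} and observe (invoking the style of Remark \ref{D-results on Fe}) that its proof applies mutatis mutandis with $F^e$ in place of $F$ — in particular that $\Ass_R$ of an $F^e$-finite $F^e$-module is finite and that the dual $\calM^\vee$ carries the structure needed to run the support argument; second, note that $R$ here, being complete regular local of characteristic $p$, is automatically $F$-finite, so all the hypotheses of the ambient results (finite length in $\sF^e$ and in $D_R$-mod, etc., from Remark \ref{D-results on Fe}) are in force; third, conclude $\Supp_R(\Hom_R(\calM,E_R(R/\fm)))=\Spec(R)$ directly. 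I do not expect any genuine mathematical difficulty; the entire content is the verification that Lyubeznik–Yildirim's argument is insensitive to the value of $e$, which is the recurring theme of this paper.
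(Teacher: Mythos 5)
Your proposal matches the paper's own proof, which consists precisely of the observation that the argument of Lyubeznik--Yildirim's Theorem 1.1 goes through verbatim once $F(-)$ is replaced by $F^e(-)$, with no further details given. Your additional sketch of the underlying mechanism is supplementary but does not change the approach, so this is essentially the same proof.
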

\begin{proof}
Once one replaces the functor $F(-)$ by $F^e(-)$, the proof is the same as the one of \cite[Theorem 1.1]{LyubeznikYildirim}. To avoid duplication, we opt not to repeat the details here.
\end{proof}

For each $R$-module $M$, we will denote its Matlis dual, $\Hom_R(M,E_R(R/\fm))$, by $M^\vee$.

We now can prove our Theorem \ref{support D-submodule} which is a natural extension of Theorem \ref{LY theorem for Fe} to $D_R$-submodules of $F^e$-finite $F^e$-modules.

\begin{proof}[Proof of Theorem \ref{support D-submodule}]
Let $\calL$ be a simple $D_R$-submodule of $\calN$. Since $\Ass_R(\calL)\subseteq \Ass_R(\calN)$ and $(0)\notin \Ass_R(\calN)$, we have $(0)\notin \Ass_R(\calL)$. The short exact sequence $0\to \calL\to \calN\to \calN/\calL\to 0$ induces a short exact sequence 
\[0\to (\calN/\calL)^\vee\to \calN^\vee \to \calL^\vee\to 0.\]
If $\Supp(\calL^\vee)=\Spec(R)$, then it follows from the short exact sequence above that $\Supp(\calN^\vee)=\Spec(R)$. We are now reduced to proving that $\Supp(\calL^\vee)=\Spec(R)$.

Since $\calL$ is a simple $D_R$-submodule of an $F^e$-finite $F^e$-module, it follows form Theorem \ref{simple D submodule} that $\calL$ is an $F^{e'e}$-submodule of $\calM$. Since $\calM$ is $F^e$-finite (hence $F^{e'e}$-finite) and $\calL$ is an $F^{e'e}$-submodule, $\calL$ must be $F^{e'e}$-finite as well because of Remark \ref{D-results on Fe}. Now Theorem \ref{LY theorem for Fe} finishes the proof.
\end{proof}

We now apply Theorem \ref{support D-submodule} to prove Theorem \ref{support in mixed}. 

\begin{proof}[Proof of Theorem \ref{support in mixed}]
The short exact sequence $0\to R\xrightarrow{\pi} R\to R/\pi R\to 0$ induces a long exact sequence
\[\cdots\to H^{j-1}_I(\bar{R})\to H^j_I(R)\xrightarrow{\pi} H^j_I(R)\to H^j_I(\bar{R})\to \cdots\]
which implies 
\begin{enumerate}
\item an injection $\Coker\Big(H^j_I(R)\xrightarrow{\pi}H^j_I(R)\Big)\hookrightarrow H^j_I(\bar{R})$, and
\item a surjection $H^{j-1}_I(\bar{R})\to \Ker\Big(H^j_I(R)\xrightarrow{\pi}H^j_I(R)\Big)$.
\end{enumerate} 

Note that $\Coker(H^j_I(R)\xrightarrow{\pi}H^j_I(R)), \Ker(H^j_I(R)\xrightarrow{\pi}H^j_I(R)), H^j_I(\bar{R}), H^{j-1}_I(\bar{R})$ carry a natural $D_{\bar{R}|k}$-module structure, and that both the injection and the surjection are $D_{\bar{R}|k}$-linear. ({\it cf.} Remark \ref{kernel and cokernel}). This makes 
\begin{enumerate}
\item $\Coker(H^j_I(R)\xrightarrow{\pi}H^j_I(R))$ a $D_{\bar{R}|k}$-submodule of $H^j_I(\bar{R})$ which is an $F_{\bar{R}}$-finite $F_{\bar{R}}$-module, and
\item $\Ker(H^j_I(R)\xrightarrow{\pi}H^j_I(R))$ a $D_{\bar{R}|k}$-module quotient of $H^{j-1}_I(\bar{R})$ which is an $F_{\bar{R}}$-finite $F_{\bar{R}}$-module.
\end{enumerate} 

Assume $\Coker(H^j_I(R)\xrightarrow{\pi}H^j_I(R))=H^j_I(R)/\pi H^j_I(R)\neq 0$. Then it follows that $H^j_I(\bar{R})\neq 0$ (the image of $I$ in $\bar{R}$ is a nonzero ideal). Hence $H^j_I(R)/\pi H^j_I(R)$ satisfies the assumptions in Theorem \ref{support D-submodule}; our assumption on $k$ ensures that $\bar{R}$ satisfies the hypothesis in Theorem \ref{support D-submodule}. Consequently 
\[\Spec(\bar{R})=\Supp_{\bar{R}}\Hom_{\bar{R}}(H^j_I(R)/\pi H^j_I(R), \bar{E}),\]
where $\bar{E}$ denotes the injective hull of $k$ as an $\bar{R}$-module.

Since $\bar{E}\cong \Hom_R(\bar{R},E)$ where $E=E_R(k)$, by the adjunction between $\otimes$ and $\Hom$, we have
\[\Hom_{\bar{R}}(H^j_I(R)/\pi H^j_I(R), \bar{E})\cong \Hom_R(H^j_I(R)/\pi H^j_I(R),E).\] 
It follows that 
\[\Spec(\bar{R})=\Supp_{R}\Hom_{R}(H^j_I(R)/\pi H^j_I(R), E)\]
where $\Spec(\bar{R})$ is considered a closed subset of $\Spec(R)$. 

The surjection $H^j_I(R)\to H^j_I(R)/\pi H^j_I(R)$ induces an injection $(H^j_I(R)/\pi H^j_I(R))^{\vee}\hookrightarrow H^j_I(R)^{\vee}$. Therefore,
\[\Spec(\bar{R})\subseteq \Supp_R(H^j_I(R)^{\vee}).\]

Assume $\Ker(H^j_I(R)\xrightarrow{\pi}H^j_I(R))\neq 0$ and set $\calK:=\Ker(H^j_I(R)\xrightarrow{\pi}H^j_I(R))$. Then $\calK$ is a $D_{\bar{R}|k}$-module quotient of $H^{j-1}_I(\bar{R})$ and $H^{j-1}_I(\bar{R})$ is an $F_{\bar{R}}$-finite $F_{\bar{R}}$-module. According to Corollary \ref{quotient is Fe-module}, $\calK$ is an $F^{e''}$-finite $F^{e''}$-module and consequently 
\[\Supp_R(\calK^\vee)=\Spec(\bar{R})\]
by Theorem \ref{LY theorem for Fe}. The injection $\calK\hookrightarrow H^j_I(R)$ induces a surjection $H^j_I(R)^\vee\twoheadrightarrow \calK^\vee$ which proves that $\Supp_R(\calK^\vee)\subseteq \Supp_R(H^j_I(R)^\vee)$. This completes the proof.
\end{proof}

\begin{remark}
Let $(R,\fm,k)$ be a complete unramified regular local ring of mixed characteristic and $I$ be an arbitrary nonzero ideal. Let $(V,\pi V,k)$ be its coefficient DVR. A $D_{R|V}$-submodule $M$ of a local cohomology module $H^j_I(R)$ may not satisfy the conclusion in Theorem \ref{support in mixed}. For instance, set $I=(\pi)$ and $M:=\Ann_{H^1_I(R)}(\pi)$. Then one can check 
\begin{enumerate}
\item $M\cong R/(\pi)$
\item $M$ is a $D_{R|V}$-submodule of $H^1_I(R)$
\end{enumerate}
Hence $\Supp_R(M^\vee)=\{\fm\}$.

This also indicates that the assumption ``$I\not\subseteq(\pi)$" in Theorem \ref{support in mixed} is necessary for its proof.
\end{remark}

\begin{remark}
\label{why best in mixed}
Let $R$ be the completion of $\ZZ[x_1,\dots,x_6]$ at the maximal ideal $\fm=(2,x_1,\dots,x_6)$. Let $I$ the monomial ideal associated with the minimal triangulation of the real projective plane. It is proved in \cite[4.5]{DattaSwitalaZhang} that
\[H^4_I(R)\cong \Hom_R(R/(2), H^7_{\fm}(R))\]
and consequently $\Supp_R(H^4_I(R)^\vee)=\Spec(R/(2))$. 

Therefore, without any further assumptions, the conclusion in Theorem \ref{support in mixed} is the best possible.
\end{remark}

In light of Theorem \ref{support in mixed}, we would like to ask the following.
\begin{question}
Let $R$ be a complete unramified regular local ring of mixed characteristic and $(V,\pi V)$ be its coefficient ring. 
\begin{enumerate}
\item Is it always true that
\[\Spec(R/\pi R)\subseteq \Supp_R(H^j_I(R)^\vee)\]
for each ideal $I$ and each integer $j$?
\item Can one characterize the local cohomology modules $H^j_I(R)$ such that 
\[\Supp_R(H^j_I(R)^\vee)=\Spec(R)?\]
\end{enumerate}
\end{question}

\bibliographystyle{skalpha}
\bibliography{CommonBib}
\end{document}